\theoremstyle{plain}
\newtheorem{theorem}                 {Theorem}[section]
\newtheorem{lemma}        [theorem]  {Lemma}
\newtheorem{proposition}  [theorem]  {Proposition}
\theoremstyle{definition}
\newtheorem{example}      [theorem]  {Example}
\newtheorem{definition}   [theorem]  {Definition}
\newtheorem{remark}       [theorem]  {Remark}
\numberwithin{equation}{section}
\def \rn{{\mathbb R}}
\def \F{\mathcal F}
\def \H{\mathcal H}
\def \V{\mathcal V}
\def \nab#1#2{\hbox{$\nabla$\kern -.3em\lower 1.0 ex
		\hbox{$#1$}\kern -.1 em {$#2$}}}
\def \nab#1#2{\nabla_{#1}{#2}}
\def \lb#1#2{[#1,#2]}
\def \g{\mathfrak{g}}
\def \h{\mathfrak{h}}
\def \k{\mathfrak{k}}
\def \m{\mathfrak{m}}
\def \p{\mathfrak{p}}
\def \r{\mathfrak{r}}
\DeclareMathOperator{\ad}{ad}
\def \SLR#1{\text{\bf SL}_{#1}(\rn)}
\def \SL2{\widetilde{\text{\bf SL}}_{2}(\rn)}
\def \slr#1{\mathfrak{sl}_{#1}(\rn)}
\def \SU#1{\text{\bf SU}(#1)}
\def \su#1{\mathfrak{su}(#1)}
\def \sol{\mathfrak{sol}}
\DeclareMathOperator{\trace}{trace}
\numberwithin{equation}{section}
\begin{document}

\subjclass[2020]{53C35, 53C43, 58E20}
	
\keywords{harmonic morphisms, Lie groups, conformal and minimal foliations}
	
\author{Sigmundur Gudmundsson}
\address{Mathematics, Faculty of Science\\
University of Lund\\
Box 118, Lund 221 00\\
Sweden}
\email{Sigmundur.Gudmundsson@math.lu.se}
	
\author{Thomas Jack Munn}
\address{Mathematics, Faculty of Science\\
University of Lund\\
Box 118, Lund 221 00\\
Sweden}
\email{Thomas.Munn@math.lu.se}

\title
[Harmonic Morphisms and Minimal Conformal Lie Foliations]
{Harmonic Morphisms and Minimal\\ Conformal Foliations on Lie Groups}

\begin{abstract}
Let $G$ be a Lie group equipped with a left-invariant Riemannian metric. Let $K$ be a semisimple and normal subgroup of $G$ generating a left-invariant conformal foliation $\F$ on $G$.  We then show that the foliation $\F$ is Riemannian and minimal.  This means that locally the leaves of $\F$ are fibres of a harmonic morphism. We also prove that if the metric restricted to $K$ is biinvariant then $\F$ is totally geodesic.
\end{abstract}
	
	
\maketitle

\section{Introduction and Main Results}
\label{section-introduction}

In differential geometry the study of minimal submanifolds of a given Riemannian ambient space $(M,g)$ plays a central role. Harmonic morphisms $\phi:(M,g)\to (N,h)$ between Riemannian manifolds are useful tools for the construction of minimal submanifolds of a given ambient space $M$.
\smallskip 

Harmonic morphisms are the solutions to an  over-determined,  non-linear system of partial differential equations.  For this reason they are difficult to find, and there exist 3-dimensional Riemannian Lie groups which do not allow for any solutions, not even locally, see \cite {Gud-Sve-5}.  For the general theory of harmonic morphisms between Riemannian manifolds we refer to the excellent book \cite{Bai-Woo-book} and the regularly updated online bibliography \cite{Gud-bib}.
\smallskip

The relationship between minimal foliations and harmonic morphisms has been well studied.  The following two results can be seen as the most important ones.

\begin{theorem}\cite{Wood1986}
A foliation on a Riemannian manifold, of codimension $n=2$, produces harmonic morphisms if and only if it is conformal and has minimal leaves.
\end{theorem}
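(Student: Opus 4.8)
The plan is to reduce everything to the Fuglede--Ishihara characterisation, which states that a smooth map between Riemannian manifolds is a harmonic morphism precisely when it is both harmonic and horizontally weakly conformal. Since the statement is local, one works with a local submersion $\phi\colon (M,g)\to (N,h)$ whose fibres are the leaves of $\F$; here the target $N$ is a \emph{surface} because the codimension is $n=2$. The decisive analytic input is the tension field formula for a horizontally conformal submersion with dilation $\lambda$,
$$\tau(\phi) \;=\; -\,d\phi\big(\mu^{\V}\big)\;+\;(2-n)\,d\phi\big(\grad^{\H}\ln\lambda\big),$$
where $\mu^{\V}$ denotes the mean curvature vector field of the fibres and $\grad^{\H}$ the horizontal component of the gradient (see \cite{Bai-Woo-book}). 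The key feature is that in codimension $n=2$ the coefficient $(2-n)$ vanishes, so that $\tau(\phi)=-d\phi(\mu^{\V})$ depends only on the geometry of the leaves and not at all on the dilation.

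For the direction in which $\F$ already produces harmonic morphisms, I would argue as follows. Locally the leaves are the fibres of a harmonic morphism $\phi$, which we may take to be a submersion, so $\lambda>0$ throughout. By Fuglede--Ishihara the map $\phi$ is horizontally weakly conformal; since $\F$ is the foliation by the fibres of $\phi$, this is exactly the statement that $\F$ is a conformal foliation. Moreover $\phi$ is harmonic, so $\tau(\phi)=0$, and substituting $n=2$ into the formula above gives $d\phi(\mu^{\V})=0$. Because $\mu^{\V}$ is horizontal and $d\phi$ is injective on the horizontal space of a submersion, this forces $\mu^{\V}=0$, so the leaves are minimal.

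For the converse, assume $\F$ is conformal with minimal leaves. Frobenius integrability lets me pass to a local simple foliation, so that the leaves are the fibres of a submersion $\phi$ onto a $2$-dimensional leaf space. Conformality of $\F$ endows this leaf space with a well-defined conformal class of transverse metrics, and a $2$-dimensional conformal class is a Riemann surface structure, which I take as the target so that $\phi$ becomes horizontally conformal by construction; here it is essential that harmonic morphisms into a surface are invariant under conformal changes of the target metric, so that only the conformal class of $h$ is needed. With minimality giving $\mu^{\V}=0$ and $n=2$ killing the dilation term, the formula yields $\tau(\phi)=0$, so $\phi$ is harmonic and horizontally conformal, hence a harmonic morphism by Fuglede--Ishihara.

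The main obstacle, and the conceptual heart of the theorem, is precisely the factor $(2-n)$ multiplying $\grad^{\H}\ln\lambda$ in the tension field formula: it is this factor that decouples harmonicity from the behaviour of the dilation exactly when $n=2$, collapsing the overdetermined harmonic morphism system into the single geometric condition that the leaves be minimal. The remaining work is essentially bookkeeping, namely verifying that the transverse conformal structure descending from a conformal $\F$ is well defined and genuinely makes $\phi$ horizontally conformal, and checking that in the submersion setting there are no critical points to complicate the argument.
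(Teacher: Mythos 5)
This statement is imported from Wood's paper \cite{Wood1986}; the present paper gives no proof of it, so there is nothing internal to compare against. Judged on its own, your argument is the standard proof of Wood's theorem (essentially the one in \cite{Bai-Woo-book}) and it is correct: the Fuglede--Ishihara characterisation plus the fundamental tension-field identity $\tau(\phi)=-d\phi(\mu^{\V})+(2-n)\,d\phi(\grad\ln\lambda)$, with the $(2-n)$ factor dying exactly in codimension two, is precisely the mechanism behind the result. Two small points are worth making explicit. First, in the forward direction you should note why the harmonic morphism whose fibres are the leaves is submersive: its regular fibres have codimension equal to $\dim N=2$, and since the leaves of $\F$ all have codimension $2$ the relevant points are regular, so $\lambda>0$ and $d\phi|_{\H}$ is a conformal isomorphism, which is what lets you conclude $\mu^{\V}=0$ from $d\phi(\mu^{\V})=0$. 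Second, the ``bookkeeping'' you defer in the converse --- that the transverse conformal class descends to the local leaf space --- is exactly the content of the condition $B^{\H}=g\otimes V$ (equivalently $(\L_{V}g)|_{\H\times\H}=\rho(V)\,g|_{\H\times\H}$), i.e.\ holonomy invariance of the horizontal conformal structure; it deserves a sentence rather than a wave, but it is not a gap. With those two remarks supplied, the proof is complete.
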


\begin{theorem}\cite{Bry}\label{theorem-homothetic} 
A homothetic foliation on a Riemannian manifold, of codimension $n\neq 2$, produces harmonic morphisms if and only if the mean curvature vector $\mu^\mathcal{V}$ is of gradient type, or equivalently, if its dual $(\mu^\V)^\flat$ is closed.
\end{theorem}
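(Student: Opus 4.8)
The plan is to reduce the statement to a single identity on $M$ by representing the foliation locally as the fibres of a horizontally conformal submersion. Since $\F$ is conformal, each point of $M$ has a neighbourhood on which the leaves are the fibres of a submersion $\phi\colon(M,g)\to(N,h)$ that is horizontally (weakly) conformal with some dilation $\lambda>0$, where $n=\dim N$ is the codimension of $\F$. That $\F$ is \emph{homothetic} means precisely that $\lambda$ is constant along the leaves; equivalently $\grad(\ln\lambda)$ is horizontal, so that $\grad(\ln\lambda)=\grad^\H(\ln\lambda)$. Writing $\mu^\V$ for the mean curvature of the leaves (a horizontal vector field) and recalling the theorem of Fuglede and Ishihara, that $\phi$ is a harmonic morphism if and only if it is both horizontally conformal and harmonic, everything comes down to the tension field $\tau(\phi)$.

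First I would record the fundamental expression for the tension field of a horizontally conformal submersion,
\begin{equation*}
\tau(\phi)=d\phi\big((n-2)\,\grad^\H(\ln\lambda)-\mu^\V\big)
\end{equation*}
(with the usual sign conventions), which one obtains by evaluating $\tau(\phi)=\trace\nabla d\phi$ on an adapted orthonormal frame split into horizontal and vertical parts: the vertical vectors lie in $\ker d\phi$ and contribute only through the horizontal component $\mu^\V$ of $\sum_i\nabla_{V_i}V_i$, while horizontal conformality converts the horizontal vectors into the term in $\grad^\H(\ln\lambda)$, whose coefficient $(n-2)$ is exactly what makes the hypothesis $n\neq 2$ indispensable. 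Because $d\phi$ is injective on $\H$ and both $\mu^\V$ and $\grad^\H(\ln\lambda)$ are horizontal, $\tau(\phi)=0$ is equivalent to the \emph{fundamental equation}
\begin{equation*}
\mu^\V=(n-2)\,\grad^\H(\ln\lambda);\tag{$\ast$}
\end{equation*}
the overall and relative signs are immaterial for what follows, since the right-hand side is a constant multiple of a gradient in either case.

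For the forward implication, if $\F$ produces harmonic morphisms then $(\ast)$ holds locally, and homotheticity gives $\grad^\H(\ln\lambda)=\grad(\ln\lambda)$; hence $\mu^\V=\grad\big((n-2)\ln\lambda\big)$ is a gradient and its dual $(\mu^\V)^\flat=(n-2)\,d(\ln\lambda)$ is exact, in particular closed. For the converse, assume $(\mu^\V)^\flat$ is closed, so that locally $\mu^\V=\grad f$ for some function $f$; since $\mu^\V$ is horizontal, $f$ is automatically constant along the leaves, i.e.\ \emph{basic}. Starting from any local horizontally conformal submersion $\phi_0$ representing $\F$, with (basic) dilation $\lambda_0$, I would conformally rescale the target metric by $h\mapsto\rho^2h$. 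Such a rescaling keeps $\phi_0$ horizontally conformal and multiplies its dilation by the basic factor $\rho\circ\phi_0$. The function $\tfrac{1}{n-2}f-\ln\lambda_0$ is a difference of basic functions, hence basic, and so descends along $\phi_0$ to a function $\ln\rho$ on $N$. With this choice the new dilation $\lambda=(\rho\circ\phi_0)\,\lambda_0$ satisfies $\ln\lambda=\tfrac{1}{n-2}f$, whence $(n-2)\,\grad(\ln\lambda)=\grad f=\mu^\V$ and, $\lambda$ being basic, $\grad(\ln\lambda)=\grad^\H(\ln\lambda)$. Thus $(\ast)$ holds and $\phi_0\colon(M,g)\to(N,\rho^2h)$ is a harmonic morphism whose fibres are the leaves of $\F$. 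The asserted equivalence between ``$\mu^\V$ of gradient type'' and ``$(\mu^\V)^\flat$ closed'' is then just the Poincar\'e lemma in this local picture.

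The main obstacle is the converse: one must ensure that the prescribed modification of the dilation is geometrically realizable, i.e.\ that the basic function $\tfrac{1}{n-2}f-\ln\lambda_0$ genuinely descends to a well-defined $\ln\rho$ on a neighbourhood in $N$ --- which uses connectedness of the leaves and the fact that $\phi_0$ is an honest submersion onto its image --- and that conformally rescaling the target does preserve horizontal conformality while changing the dilation exactly by the factor $\rho\circ\phi_0$. A secondary but delicate point is the careful bookkeeping of the trace and sign conventions in the tension-field identity, since the coefficient $(n-2)$ is simultaneously what excludes the case $n=2$ (covered instead by Wood's theorem above) and what makes the passage between $\mu^\V$ and $\grad(\ln\lambda)$ invertible.
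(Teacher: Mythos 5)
The paper offers no proof of this statement: it is quoted directly from Bryant \cite{Bry} (see also Chapter 4 of \cite{Bai-Woo-book}), so there is no internal argument to compare yours against. Your proposal is essentially the standard proof of this result and is correct: you combine the Fuglede--Ishihara characterisation of harmonic morphisms with the fundamental tension-field identity for horizontally conformal submersions, and in the converse direction you use the freedom to rescale the codomain metric conformally so as to absorb the basic potential of $\mu^\V$ into the dilation --- exactly the mechanism that makes the hypothesis $n\neq 2$ indispensable. Two minor points of bookkeeping: in the usual normalisation the identity reads $\tau(\phi)=(2-n)\,d\phi(\grad^\H\ln\lambda)-(m-n)\,d\phi(\mu^\V)$ with $\mu^\V$ the normalised mean curvature, so the factor $m-n$ you drop is harmless only under the convention (consistent with this paper's use of $\trace B^\V$) that $\mu^\V$ denotes the full trace; and in the forward direction one should observe that basicness of the dilation is independent of the chosen representing submersion, since any two differ by a conformal diffeomorphism of the targets which multiplies the dilation by a basic factor. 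With those caveats your argument is complete.
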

\begin{remark}
It is then clear that if a homothetic foliation of codimension $n \neq 2$ has minimal leaves then it produces harmonic morphisms. Lemma \ref{lemma-HM-need-minimal-fibres} shows that in the setting of this work, the converse holds too.
\end{remark}

The case of conformal left-invariant foliations of Riemannian Lie groups by semisimple subgroups of codimension two is examined in \cite{Gud-Mun-2}. The main results presented here are generalisations of those in \cite{Gud-Mun-2} to arbitrary codimension. In fact if $K$ is a codimension two semisimple subgroup of $G$ that generates a left-invariant conformal foliation then $K$ must be a normal subgroup of $G$, see \cite{Gud-Sve-6}, this is not true for higher codimension cases and therefore we need to impose the additional condition that $K$ be a \emph{normal} subgroup of G.
The following example shows that we can not extend the results of \cite{Gud-Mun-2} without this additional condition.
\smallbreak

\begin{example}
Let $(G,g)$ be a six-dimensional Riemannian Lie group such that the orthonormal basis $\{A,B,C,X,Y,Z\}$ for its Lie algebra $\g$ satisfies the following Lie bracket relations

$$\lb AB = 2C,\quad \lb CA = 2B,\quad \lb BC = 2A,$$

\begin{eqnarray*}
&&\lb AX = -B+2Z,\quad \lb AY= 0,\quad\lb AZ = A+C-2X,\\
&&\lb BX = A-C,\quad \lb BY = 2Z,\quad\lb BZ = -2Y,\\
&&\lb CX = B+2Y,\quad\lb CY = A+C-2X,\quad \lb CZ = 0,\\
&&\lb XY =  \tfrac{1}{2}\,A - X,\quad \lb XZ = \tfrac{1}{2}\,C - X,\quad \lb YZ = -\tfrac{1}{2}\,B.
\end{eqnarray*}
Here the compact semisimple $K=\SU 2$ is equipped with a biinvariant Riemannian metric (Example \ref{example-9}).  It is \emph{not normal} in $G$ since its Lie algebra $\k$, generated by $A,B,C$ is not an ideal in $\g$. The left-invariant foliation $\F$ generated by $K$ is Riemannian but it is \emph{not minimal}.
\end{example}

We now state the three most important results of this work.

\begin{theorem}
\label{theorem-totally-geodesic}
Let $(G,g)$ be a Riemannian Lie group and $K$ be a semisimple normal subgroup of $G$ generating a left-invariant conformal foliation $\F$ on $G$. If the restriction of the metric $g$ to $K$ is biinvariant then the foliation $\F$ has totally geodesic leaves.
\end{theorem}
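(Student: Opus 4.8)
The plan is to argue entirely at the level of the Lie algebra. Write $\g=\k\oplus\m$ for the orthogonal splitting into the vertical subspace $\k$ (the left-invariant extension of the tangent space to the leaves of $\F$) and the horizontal subspace $\m=\k^{\perp}$. The leaves are totally geodesic exactly when the second fundamental form $B^{\V}(U,W)=\H(\nab U W)$ vanishes for all $U,W\in\k$, so it suffices to show $\ip{\nab U W}{X}=0$ for all $U,W\in\k$ and all horizontal $X\in\m$. For left-invariant fields the Koszul formula reads
\begin{equation*}
2\,\ip{\nab U W}{X}=\ip{\lb U W}{X}-\ip{\lb U X}{W}-\ip{\lb W X}{U}.
\end{equation*}
Since $K$ is normal, $\k$ is an ideal, so $\lb U W\in\k$ is orthogonal to $X$ and the first term vanishes. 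Hence the statement reduces to the purely algebraic identity
\begin{equation*}
\ip{\lb U X}{W}+\ip{\lb W X}{U}=0,\qquad U,W\in\k,\ X\in\m,
\end{equation*}
that is, to showing that $\ad_X$ restricts to a skew-symmetric endomorphism of $(\k,g|_{\k})$ for every horizontal $X$.

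The main obstacle is that $X$ lies outside $\k$, so biinvariance — which only controls brackets internal to $\k$ — does not apply to $\ad_X$ directly. I would overcome this by combining the two structural hypotheses on $K$. Normality makes $\k$ an ideal, so $\ad_X$ preserves $\k$ and, by the Jacobi identity, $\ad_X|_{\k}$ is a derivation of $\k$. Semisimplicity of $\k$ then forces every derivation to be inner: $\operatorname{Der}(\k)=\ad(\k)$, and since the centre of $\k$ is trivial the map $\ad\colon\k\to\operatorname{Der}(\k)$ is an isomorphism. Consequently, for each horizontal $X$ there is a unique $\kappa(X)\in\k$ with $\ad_X|_{\k}=\ad_{\kappa(X)}|_{\k}$. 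This replacement of an ``external'' bracket by an inner derivation of $\k$ is the crux of the argument and the step I expect to require the most care to phrase cleanly.

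With this reduction in place the conclusion is immediate. Biinvariance of the metric restricted to $K$ says exactly that $\ad_V$ is skew-symmetric on $(\k,g|_{\k})$ for every $V\in\k$; taking $V=\kappa(X)$ shows that $\ad_{\kappa(X)}|_{\k}=\ad_X|_{\k}$ is skew-symmetric. Therefore
\begin{equation*}
\ip{\lb U X}{W}+\ip{\lb W X}{U}=-\ip{\ad_X U}{W}-\ip{\ad_X W}{U}=0,
\end{equation*}
so $B^{\V}(U,W)=0$ and the foliation $\F$ has totally geodesic leaves. I note that this route uses only that $K$ is semisimple, normal, and carries a biinvariant metric; the conformality of $\F$, although assumed in the statement, does not appear to be needed for this particular conclusion.
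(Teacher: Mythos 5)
Your proof is correct, and it takes a genuinely different route from the paper's. The paper first decomposes $\k$ into simple ideals $\k_1\oplus\dots\oplus\k_r$, uses the fact that derivations of a semisimple algebra are inner to show each $\k_t$ is itself an ideal of $\g$ (killing the cross terms $x^\alpha_{i\beta}$ between different factors), identifies $g|_{\k_t\times\k_t}$ with a negative multiple $-\Lambda_t B$ of the Killing form, and then exploits the $\ad$-invariance $B([V_\alpha,V_\beta],X_i)=B(V_\alpha,[V_\beta,X_i])$ together with antisymmetry of the bracket to conclude $x^\beta_{i\alpha}+x^\alpha_{i\beta}=0$. You instead apply the same inner-derivation fact directly to $\ad_X|_\k$ for horizontal $X$: since $\k$ is an ideal, $\ad_X|_\k\in\operatorname{Der}(\k)=\ad(\k)$, so $\ad_X|_\k=\ad_{\kappa(X)}|_\k$ is skew-symmetric because biinvariance of $g|_\k$ makes every $\ad_V$, $V\in\k$, skew-symmetric; by the Koszul formula (whose first term dies by normality) this is exactly total geodesy. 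Your version buys something: it bypasses the decomposition into simple factors and the identification of the metric with the Killing form (i.e.\ the Schur-type argument that any invariant form on a simple algebra is a multiple of $B$), and it isolates the single algebraic mechanism at work, namely that an outer bracket $\ad_X$ acts on $\k$ as an inner one. Your closing observation that conformality of $\F$ is not needed is also consistent with the paper, whose proof likewise never invokes it. The only point worth phrasing carefully is the one you already flag implicitly: "biinvariant'' should be read as implying skew-symmetry of each $\ad_V$ on $(\k,g|_\k)$, which always holds (differentiate $\Ad(\exp tV)$ acting by isometries), so no connectedness issue arises.
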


\begin{theorem}
\label{theorem-minimal}
Let $(G,g)$ be a Riemannian Lie group and $K$ be a semisimple subgroup of $G$ generating a left-invariant conformal foliation $\F$ on $G$.  If $K$ is normal in $G$ then $\F$ is Riemannian with minimal leaves.
\end{theorem}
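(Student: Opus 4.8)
The plan is to translate every condition into the Lie algebra $\g$ of $G$ and then let the normality of $K$ do the work. Write $\V = \k$ for the left-invariant vertical distribution tangent to the leaves (the Lie algebra of $K$) and $\H = \k^{\perp}$ for its horizontal orthogonal complement, and denote by $(\cdot)^{\H}$, $(\cdot)^{\V}$ the corresponding orthogonal projections. Because $g$ is left-invariant, the Levi-Civita connection on left-invariant fields is given by Koszul's formula
\[
2\,g(\nabla_X Y, Z) = g([X,Y],Z) - g([Y,Z],X) + g([Z,X],Y),
\]
so that the relevant invariants of $\F$ become algebraic. First I would record the symmetric form on $\H$ detecting the transverse geometry, $\sigma_V(X,Y) = g([V,X]^{\H},Y) + g([V,Y]^{\H},X)$ for $V \in \k$ and $X,Y \in \H$: the foliation is Riemannian exactly when $\sigma \equiv 0$, and the conformality hypothesis means $\sigma_V = \lambda(V)\,g$ on $\H$ for some linear functional $\lambda$ on $\k$. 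Second, a short Koszul computation gives the mean curvature of the leaves as $g(\mu^{\V},X) = \sum_a g([X,V_a],V_a) = \trace\!\big(\ad_X|_{\k}\big)$ for any orthonormal basis $\{V_a\}$ of $\k$ and any horizontal $X$, where $\ad_X|_{\k}$ is a priori only defined after projecting back into $\k$.

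Next I would invoke normality, which is equivalent to $\k$ being an ideal of $\g$, i.e. $[\g,\k] \subseteq \k$. For $V \in \k$ and horizontal $X$ this forces $[V,X] \in \k$, hence $[V,X]^{\H} = 0$; therefore $\sigma \equiv 0$ and $\F$ is Riemannian. Equivalently, the conformal dilation is pinned down by $\lambda(V)\,g(X,X) = \sigma_V(X,X) = 0$ for a unit horizontal $X$, so $\lambda \equiv 0$. The same ideal property makes $\ad_X$ genuinely preserve $\k$, so that $\ad_X|_{\k}\colon \k \to \k$, $W \mapsto [X,W]$, is an honest endomorphism, and by the Jacobi identity it is a derivation of $\k$. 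Thus the mean-curvature formula reads simply $g(\mu^{\V},X) = \trace(\ad_X|_{\k})$, the trace of a derivation of $\k$.

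The decisive step is then purely Lie-theoretic and is where semisimplicity enters. Since $K$ is semisimple, $\k$ is a semisimple Lie algebra, so by Whitehead's lemma every derivation of $\k$ is inner, $\operatorname{Der}(\k) = \ad(\k)$; writing $\ad_X|_{\k} = \ad_Z$ for some $Z \in \k$ and using $\k = [\k,\k]$ gives $\trace(\ad_X|_{\k}) = \trace(\ad_Z) = 0$. (Alternatively, every derivation is skew-symmetric with respect to the Killing form $\kappa$ of $\k$, which is non-degenerate by Cartan's criterion, and an endomorphism skew with respect to a non-degenerate symmetric form is trace-free in characteristic zero.) Hence $g(\mu^{\V},X) = 0$ for all horizontal $X$, so $\mu^{\V} = 0$ and the leaves of $\F$ are minimal. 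Since $g$ and $\F$ are left-invariant, these identities, verified at the identity, hold throughout $G$.

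I do not expect a single hard estimate; rather the main obstacle is conceptual bookkeeping. One must carefully derive the correct algebraic expressions for ``Riemannian'', ``conformal'', and ``minimal'' from Koszul's formula, tracking the orthogonal projections and signs, and then recognise that normality is precisely the hypothesis that simultaneously (i) annihilates the transverse second fundamental form, giving the Riemannian conclusion, and (ii) upgrades $\ad_X|_{\k}$ to a derivation of $\k$, after which the structure theory of semisimple Lie algebras forces its trace to vanish and yields minimality. It is worth checking that the Example's failure of minimality is consistent with this picture: there $K$ is not normal, $\k$ is not an ideal, and $\ad_X|_{\k}$ must be replaced by $W \mapsto [X,W]^{\V}$, which need not be a derivation and so need not be traceless.
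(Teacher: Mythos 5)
Your proof is correct, and it takes a genuinely different route from the paper's. For the Riemannian part the paper does not use normality at all: its Proposition \ref{proposition-conformal-foliation-is-Riemannian} shows that \emph{any} left-invariant conformal foliation generated by a semisimple subgroup is Riemannian, via a structure-constant computation combining conformality with $[\k,\k]=\k$; you instead observe that normality gives $[\V,\H]\subset\V$, hence $B^\H\equiv 0$ outright --- simpler (and not even needing conformality or semisimplicity for this step), but it does not recover the paper's more general statement, which the paper needs elsewhere, e.g.\ in Lemma \ref{lemma-HM-need-minimal-fibres} and Proposition \ref{proposition-not-perfect}, where $K$ is not assumed normal. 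For minimality, both arguments ultimately rest on the identity $g(\mu^\V,X_i)=\trace(\ad_{X_i}|_\k)$, valid because $\k$ is an ideal, but you finish in one stroke: $\ad_{X_i}|_\k$ is a derivation of the semisimple algebra $\k$, hence inner by Whitehead, hence traceless since $\k=[\k,\k]$ (equivalently, it is skew for the non-degenerate Killing form). The paper instead first proves minimality when $g|_K$ is the Cartan--Killing metric (Proposition \ref{Proposition-CK-minimal}, using invariance of the Killing form), and then transfers the conclusion to an arbitrary left-invariant metric by replacing $g|_{\V\times\V}$ with the Cartan--Killing metric and exploiting that $\trace(\ad_{X_i})$ and its horizontal block are unchanged under this change of basis. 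Your version is shorter and more conceptual; the paper's buys the intermediate Proposition \ref{Proposition-CK-minimal} as a standalone result and keeps the Riemannian statement in the greater generality it needs later.
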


As a consequence of Theorem \ref{theorem-minimal} we have the following result concerning harmonic morphisms.

\begin{theorem}
\label{theorem-harmonic-morphism}
Let $(G,g)$ be a Riemannian Lie group and $K$ be a semisimple normal subgroup of $G$ generating a left-invariant foliation $\F$ on $G$. Let $\phi:G\to G/K$ be the natural projection onto the quotient group.  If $G/K$ is equipped with its unique Riemannian metric $h$ such that $\phi:(G,g)\to (G/K,h)$ is a Riemannian submersion then $\phi$ is a harmonic morphism.
\end{theorem}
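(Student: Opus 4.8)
The plan is to derive Theorem \ref{theorem-harmonic-morphism} as a consequence of Theorem \ref{theorem-minimal} together with the classical characterizations of harmonic morphisms via minimal conformal foliations (the cited results of Wood and Bryant). The key observation is that the natural projection $\phi:G\to G/K$ onto the quotient group has as its fibres exactly the cosets $gK$, which are precisely the leaves of the left-invariant foliation $\F$ generated by $K$. So the geometric content of $\phi$ being a harmonic morphism is entirely encoded in the geometry of $\F$, which Theorem \ref{theorem-minimal} already controls. Note that $K$ being normal is what guarantees $G/K$ is a genuine Lie group and $\phi$ a group homomorphism, so this hypothesis is already in force.

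First I would invoke Theorem \ref{theorem-minimal} to conclude that, under the stated hypotheses, $\F$ is a Riemannian foliation with minimal leaves. The Riemannian property of $\F$ is exactly the statement that the horizontal distribution $\mathcal{H}=\mathcal{V}^\perp$ is preserved under the relevant holonomy, equivalently that $\phi$ is a Riemannian submersion with respect to a well-defined metric $h$ downstairs; this is where the existence and uniqueness of the metric $h$ in the statement comes from, since a Riemannian foliation whose leaves are the fibres of a submersion onto a manifold induces a unique metric on the base making the submersion Riemannian. I would make explicit that left-invariance of $\F$ forces $h$ to be left-invariant on $G/K$, so the metric $h$ is exactly the one referred to in the theorem.

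Next I would combine the two facts---$\phi$ is a Riemannian submersion (in particular a conformal, indeed homothetic, map on the horizontal space) and $\F$ has minimal leaves---and feed them into the characterization theorems. Since a Riemannian submersion is in particular horizontally conformal with dilation $\lambda\equiv 1$, the foliation $\F$ is homothetic (trivially conformal with constant conformality factor). If the codimension $n=\dim(G/K)$ equals $2$, Wood's theorem applies directly: a conformal foliation with minimal leaves produces harmonic morphisms. If $n\neq 2$, Bryant's theorem (Theorem \ref{theorem-homothetic}) applies: a homothetic foliation with minimal leaves has vanishing mean curvature vector $\mu^{\mathcal{V}}=0$, so $(\mu^{\mathcal{V}})^\flat=0$ is trivially closed and of gradient type, and hence $\F$ produces harmonic morphisms. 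In either case the leaves of $\F$---the fibres of $\phi$---are locally fibres of a harmonic morphism, and I would then argue that $\phi$ itself is globally a harmonic morphism, not merely a local one.

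The main obstacle I anticipate is the passage from ``produces harmonic morphisms locally'' to the assertion that the specific global map $\phi:(G,g)\to(G/K,h)$ is itself a harmonic morphism. The characterization theorems are local and tell us that the foliation admits local harmonic morphisms whose fibres are the leaves; one must check that the given $\phi$ is the ``right'' such map. The cleanest route is to recall the standard fact that a map is a harmonic morphism if and only if it is a horizontally conformal harmonic map, and then verify these two properties for $\phi$ directly: horizontal conformality is immediate since $\phi$ is a Riemannian submersion (dilation $1$), and harmonicity of a horizontally conformal submersion is equivalent to minimality of its fibres (this is precisely the Fuglede--Ishihara type characterization underlying the cited theorems). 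Since Theorem \ref{theorem-minimal} gives minimal fibres, harmonicity follows, and hence $\phi$ is a harmonic morphism. I would therefore structure the proof to bypass the local-to-global issue entirely by checking the two defining conditions of a harmonic morphism for $\phi$ directly, using Theorem \ref{theorem-minimal} for the minimality input.
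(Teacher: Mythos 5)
Your proposal is correct and follows essentially the route the paper intends: the theorem is stated as an immediate consequence of Theorem \ref{theorem-minimal}, combined with the standard fact that a Riemannian submersion is horizontally conformal with dilation one and is a harmonic map (hence a harmonic morphism) precisely when its fibres are minimal. Your explicit verification of the two defining conditions of a harmonic morphism for $\phi$, rather than relying on the local statements of Wood and Bryant, is exactly the clean way to settle the local-to-global point.
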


The additional assumption that $K$ is normal is quite strong, but in the case when $K$ is of codimension three, Proposition \ref{proposition-not-perfect} shows that it is enough to require that the ambient group $G$ is not perfect.

\section{Conformal Foliations}
\label{section-conformal-foliations}

Let $(M,g)$ be a Riemannian manifold, $\V$ be an integrable distribution on $M$ and $\H$ be its orthogonal complementary distribution.  As customary, we will also denote by $\V,\H$ the orthogonal projections onto the corresponding subbundles of $TM$, and by $\F$ the foliation tangent to $\V$. Then the {\it second fundamental form} $B^\V$ of $\V$ is given by 
$$B^{\V}(V,W) = \tfrac{1}{2}\cdot\H(\nab VW + \nab WV), \ \ \text{ where } V,W\in\V.$$
The corresponding {\it second fundamental form} $B^\H$ of $\H$ satisfies
$$B^{\H}(X,Y) = \tfrac{1}{2} \cdot \V(\nab XY + \nab YX), \ \ \text{ where } X,Y \in \H.$$
		
The foliation $\F$ tangent to $\V$ is said to be {\it conformal} if there exists a vertical  vector field $V\in\V$ such that
$$B^{\H}=g\otimes V$$
and $\F$ is said to be {\it Riemannian} if $V =0$. Furthermore, $\F$ is {\it minimal} if $\trace B^{\V} =0$ and {\it totally geodesic} if $B^{\V}=0$. This is equivalent to the leaves of $\F$ being minimal and totally geodesic submanifolds of $M$, respectively.
\smallskip

The following is an immediate consequence of the above definition.

\begin{proposition}
\label{prop-general-conformality}
\cite{Bai-Woo-book}
Let $(M,g)$ be a Riemannian manifold, $\V$ be an integrable distribution on $M$ and $\H$ be its orthogonal complementary distribution. Then the foliation $\F$ tangent to $\V$ is conformal if and only if 
$$B^\H(X,X)=B^\H(Y,Y)\ \ \text{and}\ \ B^\H(X,Y)=0,$$
for any orthonormal horizontal vectors $X,Y\in\H$.  If $\F$ is conformal then it is Riemannian if and only if for all $X,Y\in\H$ we have
$$B^\H(X,X)+ B^\H(Y,Y)=0.$$
\end{proposition}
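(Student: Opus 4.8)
The plan is to prove Proposition \ref{prop-general-conformality} directly from the definition of a conformal foliation by unwinding what the condition $B^\H = g\otimes V$ means when evaluated on orthonormal horizontal vectors. First I would recall that, by definition, $\F$ is conformal precisely when there exists a vertical vector field $V\in\V$ with $B^\H(X,Y) = g(X,Y)\cdot V$ for all horizontal $X,Y\in\H$. The key observation is that this single tensorial identity is equivalent to a pair of pointwise conditions on an orthonormal frame, so the whole proof is an exercise in polarisation and in testing the bilinear form $B^\H$ against well-chosen horizontal vectors.

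For the forward implication, I would simply evaluate $B^\H = g\otimes V$ on an orthonormal pair $X,Y$: since $g(X,X) = g(Y,Y) = 1$ we immediately get $B^\H(X,X) = V = B^\H(Y,Y)$, and since $g(X,Y) = 0$ we get $B^\H(X,Y) = 0$. This gives both asserted equalities. For the converse, the substance is to reconstruct the global tensorial statement $B^\H = g\otimes V$ from the frame-wise conditions. Here I would argue that $B^\H(X,X)$ takes a common value, call it $V$, independent of the choice of unit horizontal vector $X$; the hypothesis that any two orthonormal horizontal vectors give the same value is exactly what is needed, but one must also check that \emph{any} two unit horizontal vectors (not merely an orthonormal pair) give the same value. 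The natural way to handle this is to note that given two unit vectors that are not orthogonal one can interpolate through a third unit vector orthogonal to both, or to appeal to a rotation argument within the horizontal plane; combined with the vanishing of $B^\H(X,Y)$ on orthogonal pairs and the bilinearity and symmetry of $B^\H$, polarisation then forces $B^\H(X,Y) = g(X,Y)\cdot V$ for all $X,Y\in\H$.

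For the final statement about the Riemannian condition, I would start from the fact that $\F$ being Riemannian means $V = 0$ in the representation $B^\H = g\otimes V$. Given conformality, $V = B^\H(X,X)$ for any unit horizontal $X$, so $V = 0$ if and only if $B^\H(X,X) = 0$ for all such $X$; using that $B^\H(X,X) = B^\H(Y,Y) = V$ for orthonormal $X,Y$, this is equivalent to $B^\H(X,X) + B^\H(Y,Y) = 2V = 0$, which is the stated criterion. I would take care to phrase the equivalence so that it holds for all $X,Y\in\H$ and not merely orthonormal ones, rescaling by norms where necessary.

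The main obstacle I anticipate is the converse direction of the first equivalence: promoting the frame-wise identities to the genuinely tensorial statement $B^\H = g\otimes V$ requires verifying that the common value $V = B^\H(X,X)$ is well defined independently of the choice of unit horizontal vector and that the off-diagonal polarisation identity holds for non-orthonormal pairs as well. This is essentially the observation that a symmetric bilinear form on the horizontal space which is a constant multiple of the identity on every orthonormal basis must be that constant multiple of $g$ — a routine but slightly delicate linear-algebra point that I would state carefully rather than leave implicit, since codimension $n\geq 2$ gives enough room to carry out the interpolation but the case analysis must cover every pair of horizontal vectors.
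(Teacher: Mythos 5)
Your argument is correct and is exactly the direct unwinding of the definition that the paper intends: it states this proposition as an immediate consequence of the definition of conformality and cites Baird--Wood without supplying a proof. The only simplification worth noting is that the converse needs no interpolation or rotation between non-orthogonal unit vectors: fixing a single orthonormal frame $X_1,\dots,X_n$ of $\H$, the hypotheses give $B^\H(X_i,X_i)=V$ for a common $V$ and $B^\H(X_i,X_j)=0$ for $i\neq j$, and bilinearity of $B^\H$ alone then yields $B^\H(X,Y)=g(X,Y)\cdot V$ for arbitrary horizontal $X,Y$.
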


\section{Conformal Lie Foliations}
\label{section-conformal-Lie-foliations}

Let $(G,g)$ be an $m$-dimensional Riemannian Lie group with a subgroup $K$ generating a left-invariant foliation $\F$ on $G$, of codimension $n$, by left translations.  Further let $\g=\k\oplus \m$ be an orthogonal decomposition of the Lie algebra $\g$ of $G$, where $\k$ is the Lie algebra of $K$. Let $\{ V_1,\dots,V_d,X_1\dots ,X_n\}$ be an orthonormal basis for $\g$ such that $V_1,\dots,V_d$ generate the subalgebra $\k$. Then we can describe the Lie brackets of $\k$ by the following system
$$
\ad_{V_\alpha}(V_\beta)=[V_\alpha,V_\beta]=\sum_{\gamma=1}^d c^\gamma_{\alpha\beta} V_\gamma,
$$
where the $c^\gamma_{\alpha\beta}$ are real structure constants of $\k$.  The rest of the bracket relations for $\g$ satisfy
$$
\ad_{X_i}(V_\alpha)
=[X_i,V_\alpha]
=\sum_{\gamma=1}^d x^\gamma_{i\alpha} V_\gamma
+\sum_{k=1}^n y_{i\alpha}^k\,X_k,
$$ 
$$
\ad_{X_i}(X_j)
=[X_i,X_j]
=\sum_{\gamma=1}^d \theta^\gamma_{ij}\, V_\gamma
+\sum_{k=1}^n\rho_{ij}^k\,X_k,$$
for some real coefficients $x^\gamma_{i\alpha},y_{ij}^k,\rho_{ij}^k,\theta_{ij}^\gamma$.
\smallskip

The Koszul formula for the Levi-Civita connection $\nabla$ on $(G,g)$ implies that the second fundamental forms $B^{\V}$ and $B^{\H}$ of the vertical and horizontal  distributions, respectively, satisfy
\begin{equation*}
B^{\V}(V_\alpha,V_\beta) 
=\tfrac 12\cdot\sum_{k=1}^n\big( (g([X_k,V_\alpha],V_\beta)
+g([X_k,V_\beta],V_\alpha)\big)\cdot X_k ,
\end{equation*}
\begin{equation*}\label{equation-BH}
B^{\H}(X_i,X_j) 
=\tfrac 12\cdot\sum_{\gamma=1}^d\big( g([V_\gamma,X_i],X_j)
+g([V_\gamma,X_j],X_i)\big)\cdot V_\gamma.
\end{equation*}

\begin{remark}
We now fix the notation for the examples given throughout this work. We denote vertical basis vectors by $A,B,C$ and horizontal basis vectors by $X,Y,Z,W$.
\end{remark}
\begin{example}
Let $(G,g)$ be the four-dimensional solvable Riemannian Lie group such that the orthonormal basis $\{A,B,X,Y\}$ for its Lie algebra $\g$ satisfies the following bracket relations
$$[A, B] = -2A,$$
$$
[B, X] = A + X + Y,
\quad [B, Y] = -X + Y,
\quad [X, Y] = A.
$$
The subgroup $K=H^2$ is solvable (Example \ref{example-02}).  The left-invariant foliation $\F$ generated by $K$ is conformal but not Riemannian.  The horizontal distribution $\H$ is not integrable.   
\end{example}

The following result provides us with a sufficient condition for a conformal foliation $\F$ on $G$ to be Riemannian.  It generalises Proposition 3.3 of \cite{Gud-12}. 

\begin{proposition}
\label{proposition-conformal-foliation-is-Riemannian}
Let $(G,g)$ be a Riemannian Lie group with a subgroup $K$ generating a left-invariant conformal foliation $\F$ on $G$. If $K$ is semisimple then the foliation $\F$ is Riemannian.
\end{proposition}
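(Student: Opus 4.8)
The plan is to prove that the vertical vector field $V$ witnessing conformality is forced to vanish, and then to invoke Proposition \ref{prop-general-conformality}. Since $\F$ is conformal we have $B^\H=g\otimes V$, so in the chosen orthonormal basis $B^\H(X_i,X_j)=\delta_{ij}\,V$ for a single vertical vector $V$. First I would read off the components of $V$ from the explicit formula for $B^\H$ recorded above: evaluating on $(X_i,X_i)$ gives $V=\sum_{\gamma=1}^d g([V_\gamma,X_i],X_i)\,V_\gamma$ for each $i$, whence $g(V,V_\gamma)=g([V_\gamma,X_i],X_i)$ independently of $i$. Summing over $i=1,\dots,n$ then yields
$$n\cdot g(V,V_\gamma)=\sum_{i=1}^n g([V_\gamma,X_i],X_i)=\sum_{i=1}^n g(\ad_{V_\gamma}(X_i),X_i).$$

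The key step is to reinterpret the right-hand side as a trace and exploit the subalgebra structure. Since $\k$ is a subalgebra we have $\ad_{V_\gamma}(\k)\subseteq\k$, so with respect to the orthogonal splitting $\g=\k\oplus\m$ the operator $\ad_{V_\gamma}$ is block upper-triangular, of the form $\left(\begin{smallmatrix}\ast&\ast\\0&D_\gamma\end{smallmatrix}\right)$, where $D_\gamma=\H\circ\ad_{V_\gamma}|_\m\colon\m\to\m$ is the horizontal-to-horizontal part and the lower-left block vanishes exactly because $\ad_{V_\gamma}$ preserves $\k$. As $X_i\in\m$ is orthogonal to $\k$, only the $\m$-component of $\ad_{V_\gamma}(X_i)$ contributes to $g(\ad_{V_\gamma}(X_i),X_i)$, so the sum above is precisely $\trace D_\gamma$. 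Thus, up to the factor $n$, the linear functional $V_\gamma\mapsto g(V,V_\gamma)$ on $\k$ coincides with $V_\gamma\mapsto\trace D_\gamma$.

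I would then show this functional vanishes on every bracket. Using $\ad_{[V_\alpha,V_\beta]}=[\ad_{V_\alpha},\ad_{V_\beta}]$ and multiplying the block-triangular matrices, the horizontal-to-horizontal block of $\ad_{[V_\alpha,V_\beta]}$ is exactly the commutator $[D_\alpha,D_\beta]=D_\alpha D_\beta-D_\beta D_\alpha$; the off-diagonal blocks never feed back into the $\m\to\m$ block precisely because the lower-left block is zero. Hence the trace of that horizontal block is
$$\trace[D_\alpha,D_\beta]=0.$$
Finally, semisimplicity of $\k$ gives $\k=[\k,\k]$, so each $V_\gamma$ is a linear combination of brackets $[V_\alpha,V_\beta]$; by linearity $\trace D_\gamma=0$ for every $\gamma$, forcing $g(V,V_\gamma)=0$ and therefore $V=0$. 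Proposition \ref{prop-general-conformality} then identifies $\F$ as Riemannian. The one point demanding care is the block computation: it is essential that $\ad_{V_\gamma}$ has vanishing $\k\to\m$ block, which is exactly where the subalgebra hypothesis enters, since otherwise the $\m\to\m$ block of the composition would acquire cross terms and the trace-of-commutator argument would break down.
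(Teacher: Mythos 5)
Your argument is correct, and it takes a genuinely different route from the paper's. The paper fixes a single horizontal unit vector $X_i$, writes $\rho(V_\gamma)=2\,g([V_\gamma,X_i],X_i)$, reduces via $[\k,\k]=\k$ to showing $g([[V_\alpha,V_\beta],X_i],X_i)=0$, and verifies this by an explicit Jacobi-identity computation with the structure constants $y^k_{i\alpha}$, where the crucial cancellation comes from the conformality relation $y^k_{i\alpha}+y^i_{k\alpha}=0$ for $k\neq i$. You instead sum over $i$, i.e.\ take the trace of the horizontal block $D_\gamma=\H\circ\ad_{V_\gamma}|_\m$, observe that block upper-triangularity of $\ad_{V_\gamma}$ (forced by $\k$ being a subalgebra) makes $V_\gamma\mapsto D_{V_\gamma}$ a Lie algebra homomorphism into $\End(\m)$, and conclude $\trace D_\gamma=0$ from $\k=[\k,\k]$ and the vanishing trace of commutators. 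The two proofs use the hypotheses slightly differently: your trace identity $\sum_i g([V_\gamma,X_i],X_i)=0$ holds for any semisimple subalgebra regardless of conformality, and conformality enters only to identify this sum with $n\,g(V,V_\gamma)$; the paper uses conformality a second time inside the cancellation and thereby obtains the pointwise statement $g([V_\gamma,X_i],X_i)=0$ for each fixed $i$ directly. Your version is more conceptual (it is essentially the statement that elements of a semisimple Lie algebra act tracelessly in any finite-dimensional representation) and avoids structure-constant bookkeeping, at the price of requiring the codimension-weighted averaging step; both are complete and correct.
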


\begin{remark}
Note that Riemannian foliations are homothetic so Theorem \ref{theorem-homothetic} can be applied. 
\end{remark}

\begin{proof}[Proof of Proposition \ref {proposition-conformal-foliation-is-Riemannian}]
Since $\F$ is a conformal foliation, we have that for $V \in \V$, the adjoint action on the horizontal space $\H$ is conformal i.e. for any $X,Y \in \H$ we have
$$ \rho(V) \cdot g(X,Y) = g([V,X],Y)+g([V,Y],X).$$
Notice that for a fixed horizontal unit vector $X_i$ we have that 
$$ \rho(V) = 2\, g([V,X_i],X_i).$$
Then by the linearity of the metric, the Lie bracket relations and the fact that $[\V,\V] = \V$, as $K$ is semisimple, we see that $\rho$ vanishes if and only if for all $1\le\alpha,\beta\le d$ we have 
$$ g([[V_\alpha,V_\beta],X_i],X_i) = 0.$$ 
We now show that this is the case
\begin{eqnarray*}
&&g([[V_\alpha,V_\beta],X_i],X_i)\\
&=&-g([[X_i,V_\alpha],V_\beta]+[[V_\beta,X_i],V_\alpha],X_i) \\
&=&-g( [\sum_{k=1}^ny_{i \alpha}^k \, X_k + 
\sum_{\gamma =1}^d x_{i \alpha}^\gamma\, V_\gamma, V_\beta] 
-[\sum_{k=1}^n y_{i \beta}^k \, X_k 
+\sum_{\gamma=1}^d x_{i \beta}^\gamma\, V_\gamma, V_\alpha],X_i) \\
&=&-\sum_{k=1}^n(y_{i \alpha}^k \cdot y_{k \beta}^i - y_{i \beta}^k \cdot y_{k \alpha}^i) \\
&=&-\sum_{ k \neq i}  (y_{i \alpha}^k \cdot y_{k \beta}^i - y_{i \beta}^k \cdot y_{k \alpha}^i) \\
&=&-\sum_{ k \neq i} ( y_{i \alpha}^k \cdot y_{k \beta}^i - y_{k \beta}^i \cdot y_{i \alpha}^k) \\
&=&0.
\end{eqnarray*}
The last step follows from the fact that the conformality implies that if $k\neq i$ then
$$y_{i \alpha}^k+y_{k \alpha}^i
=g([V_\alpha,X_i],X_k)+g([V_\alpha,X_k],X_i)=\rho(V_\alpha) \cdot g(X_i,X_k)=0.$$
\end{proof}

If we further assume that the semisimple subgroup $K$ is normal we can define the homogeneous quotient space $G/K$.

\begin{remark}\label{remark-O'Neill}
If the foliation $\F$ on the Lie group $G$ is Riemannian then there exists a unique Riemannian metric $h$ on the homogeneous quotient space $G/K$ such that the natural projection $\pi:(G,g)\to (G/K,h)$ is a Riemannian submersion.  The horizontal vector fields $X_1,\dots ,X_n$ are basic and we denote by $\bar X_1,\dots \bar X_n$ the vector fields $d\pi(X_1),\dots ,d\pi(X_n)$ forming a global orthonormal frame of the tangent bundle of the quotient space $G/K$.  According to Lemma 1 on page 460 of O'Neill's fundamental work \cite{ONe}, we have 
$$\bar\nabla_{\bar X_i}{\bar X_j}=d\pi(\nabla_{X_i}{X_j}).$$
Here $\nabla$ and $\bar\nabla$ are the Levi-Civita connections of $(G,g)$ and $(G/K,h)$, respectively.
This relation is useful for determining the geometry of the Riemannian homogeneous quotient space $(G/K,h)$.
\end{remark}

\section{Minimality and Total Geodecity}
\label{section-minimality}
	
Let $(G,g)$ be an $m$-dimensional Riemannian Lie group with a semisimple subgroup $K$, generating a left-invariant conformal foliation $\F$ on $G$, of codimension $n$. Further let $\g=\k\oplus \m$ be an orthogonal decomposition of the Lie algebra $\g$ of $G$, where $\k$ is the Lie algebra of $K$. Let $\{ V_1,\dots,V_d,X_1\dots ,X_n\}$ be an orthonormal basis for $\g$ such that $V_1,\dots,V_d$ generate the subalgebra $\k$. Then we can describe the Lie brackets of $\k$ by the following system

$$
\ad_{X_i}(V_\alpha)
=[X_i,V_\alpha]
=\sum_{\gamma=1}^d x^\gamma_{i\alpha} V_\gamma
+\sum_{k=1}^n y_{i\alpha}^k\,X_k,
$$ 

$$\ad_{X_i}(X_j)=[X_i,X_j]=\sum_{k=1}^n\rho_{ij}^k\,X_k
+\sum_{\gamma=1}^d \theta^\gamma_{ij}\, V_\gamma,$$
for some real coefficients $x^\gamma_{i\alpha},y_{i\alpha}^k,\rho_{ij}^k,\theta_{ij}^\gamma$.
\smallskip

The structure constants of $\g$ can be used to describe when the foliation $\F$ is minimal or even totally geodesic. First we notice that for the second fundamental form $B^{\V}$ of $\V$ we have
\begin{eqnarray*}
2\cdot B^{\V}(V_\alpha,V_\beta) 
&=&\sum_{k=1}^n\big(g([X_k,V_\alpha],V_\beta) +g([X_k,V_\beta],V_\alpha)\big)\cdot X_k\\
&=&\sum_{k=1}^n\sum_{\gamma=1}^d \big(
 g(x_{k\alpha}^\gamma V_\gamma,V_\beta )
+g(x_{k\beta }^\gamma V_\gamma,V_\alpha)\big)
    \cdot X_k\\
   && +\sum_{k=1}^n\sum_{i=1}^n \big(
    g(y_{k\alpha}^i X_i,V_\beta )
    +g(y_{k\beta}^i X_i,V_\alpha)\big)
    \cdot X_k\\
&=&\sum_{k=1}^n
(x_{k\alpha}^\beta+x_{k\beta}^\alpha)\cdot X_k.
\end{eqnarray*}
From this we see that the foliation $\F$ is {\it minimal} if and only if for $1\le i\le n$ we have
$$ \sum_{\alpha=1}^d\, x_{i\alpha}^\alpha = 0.$$
Furthermore, we observe that $\F$ is {\it totally geodesic} if and only if for all $1\le i\le n$ and $1\le \alpha,\beta\le d$ we have 
$$x_{i\alpha}^\beta+x_{i\beta}^\alpha=0.$$

\begin{lemma}\label{lemma-HM-need-minimal-fibres}
Let $(G,g)$ be a Riemannian Lie group and $K$ be a semisimple subgroup of $G$ generating a left-invariant conformal foliation $\F$ on $G$. Then $\F$ produces harmonic morphisms if and only if it has minimal leaves.
\end{lemma}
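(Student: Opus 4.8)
The plan is to reduce the statement to the two classical criteria quoted in the introduction and then to isolate a single algebraic computation, powered by the semisimplicity of $\k$, as the decisive step.

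First I would record that, by Proposition \ref{proposition-conformal-foliation-is-Riemannian}, the conformal foliation $\F$ is automatically Riemannian, hence homothetic, so that both classical criteria apply. I then split according to the codimension $n$. When $n=2$, Wood's theorem \cite{Wood1986} asserts that a foliation produces harmonic morphisms if and only if it is conformal with minimal leaves; since $\F$ is already conformal, this gives the desired equivalence immediately. When $n\neq 2$, Theorem \ref{theorem-homothetic} reduces the production of harmonic morphisms to the closedness of $(\mu^\V)^\flat$, where $\mu^\V=\trace B^\V$ is the (left-invariant) mean curvature vector of the leaves. The direction ``minimal $\Rightarrow$ harmonic morphisms'' is then trivial, since $\mu^\V=0$ makes $(\mu^\V)^\flat=0$ closed; the whole content lies in the converse.

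So the heart of the proof is to show, for $n\neq 2$, that closedness of $(\mu^\V)^\flat$ forces $\mu^\V=0$. Because $\F$ is left-invariant, $\mu^\V$ is a left-invariant horizontal vector field and $(\mu^\V)^\flat$ a left-invariant one-form; for such forms $d\omega(U,W)=-\omega([U,W])$, so closedness is equivalent to $g(\mu^\V,[U,W])=0$ for all $U,W\in\g$, that is, to $\mu^\V\perp[\g,\g]$. In the structure constants this reads $\sum_k \mu_k\,y^k_{i\alpha}=0$ and $\sum_k\mu_k\,\rho^k_{ij}=0$, where $\mu_k=\sum_\alpha x^\alpha_{k\alpha}$. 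The key identity I would then exploit is
$$g(\mu^\V,\mu^\V)=\sum_k\mu_k\sum_\alpha g([X_k,V_\alpha],V_\alpha)=\sum_\alpha g([\mu^\V,V_\alpha],V_\alpha)=\trace\Phi,$$
where $\Phi\colon\k\to\k$ is the linear map $\Phi(V)=\pi_\k[\mu^\V,V]$, the $\k$-component of $\ad_{\mu^\V}$ restricted to $\k$.

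The main obstacle, and the place where semisimplicity enters, is to prove that $\Phi$ is a \emph{derivation} of $\k$. A Jacobi-identity computation gives the defect
$$\Phi([V,W])-[\Phi V,W]-[V,\Phi W]=\pi_\k[\Psi V,W]+\pi_\k[V,\Psi W],\qquad \Psi(V):=\pi_\m[\mu^\V,V].$$
Here I would use that $\F$ is conformal and Riemannian: as in the proof of Proposition \ref{proposition-conformal-foliation-is-Riemannian} this yields the antisymmetry $y^k_{i\alpha}=-y^i_{k\alpha}$ of the horizontal structure constants, and combining it with the closedness relation $\sum_k\mu_k\,y^k_{i\alpha}=0$ shows that every component of $\Psi(V_\alpha)$ vanishes, so $\Psi\equiv 0$. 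The defect then vanishes and $\Phi\in\mathrm{Der}(\k)$. Since $\k$ is semisimple, every derivation is inner, hence traceless, so $\trace\Phi=0$ and therefore $g(\mu^\V,\mu^\V)=0$. As $g$ is positive definite this forces $\mu^\V=0$, i.e. $\F$ is minimal, completing the converse and with it the equivalence. I expect the verification that $\Psi\equiv 0$ — the interplay of the conformality antisymmetry with the closedness condition — to be the only genuinely delicate point; everything else is bookkeeping.
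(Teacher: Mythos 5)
Your proposal is correct, and its reduction steps coincide with the paper's: Proposition \ref{proposition-conformal-foliation-is-Riemannian} makes $\F$ Riemannian hence homothetic, left-invariance turns the closedness of $(\mu^\V)^\flat$ into the condition $\mu^\V\perp[\g,\g]$, and minimality trivially implies the production of harmonic morphisms; you even add the explicit treatment of codimension two via Wood's theorem, which the paper leaves implicit when it invokes Theorem \ref{theorem-homothetic}. Where you genuinely diverge is in the converse. The paper uses the Levi decomposition $\g=\s\oplus\r$: it places $\k$ inside $\s$, deduces $\mu^\V\in\r$ from $\mu^\V\perp[\g,\g]\supset[\s,\s]=\s$, and then concludes $\V[\mu^\V,\V]=0$ and hence $\mu^\V=0$. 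You instead establish the trace identity $g(\mu^\V,\mu^\V)=\trace\Phi$ with $\Phi=\pi_\k\circ\ad_{\mu^\V}|_\k$, show that $\Psi=\pi_\m\circ\ad_{\mu^\V}|_\k$ vanishes by combining the antisymmetry $y^k_{i\alpha}=-y^i_{k\alpha}$ (which follows from $\F$ being Riemannian, exactly as in the proof of Proposition \ref{proposition-conformal-foliation-is-Riemannian}) with $\mu^\V\perp[\g,\g]$, and conclude that $\Phi$ is a derivation of the semisimple $\k$, hence inner, hence traceless; I have checked the defect computation, the vanishing of $\Psi$ and the trace identity, and they are all correct. Your route is longer, but it buys something concrete: the paper's inferences from $\mu^\V\perp\s$ to $\mu^\V\in\r$, and from $[\mu^\V,\V]\subset\r$ to $\V[\mu^\V,\V]=0$, tacitly assume that the Levi decomposition is $g$-orthogonal, which is not given for an arbitrary left-invariant metric, whereas your argument uses only the orthogonal splitting $\g=\k\oplus\m$ and semisimplicity of $\k$ through ``every derivation is inner''. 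The paper's approach is shorter and more conceptual where that orthogonality can be arranged; yours is self-contained and metric-agnostic.
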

\begin{proof}
By Proposition \ref{proposition-conformal-foliation-is-Riemannian} $\F$ is Riemannian and thus homothetic.
Since $\F$ is homothetic by Theorem \ref{theorem-homothetic} it produces harmonic morphisms if and only if $d (\mu^\V)^\flat=0$ i.e.
\begin{eqnarray*}
d  (\mu^\V)^\flat (E,F) &=& d_E ( (\mu^\V)^\flat(F)) -d_F ( (\mu^\V)^\flat(E)) - (\mu^\V)^\flat ([E,F]) \\
&=& E(\langle \mu^\V, F\rangle) - F(\langle \mu^\V, E\rangle) - \langle \mu^\V, [E,F] \rangle  \\
&=& 0,
\end{eqnarray*}
for all left-invariant vector fields $E,F \in \g$. In particular, since the mean curvature vector $$\mu^\V = \sum_{k=1}^n \sum_{\alpha =1}^d x_{i\alpha}^\alpha X_i$$ is left-invariant $\F$ produces harmonic morphisms if and only if 

$$ \langle \mu^\V, [E,F] \rangle  =0, $$
for all $E,F \in \mathfrak{g}$.

We now suppose that $\mu^\V \neq 0$ but $\mu^\V \perp [\g,\g]$ and will obtain a contradiction.

First choose an orthonormal basis $\{\hat X_1 , \dots \hat X_n\}$ such that $\hat X_1$ is the normalised vector $\mu^\V$ i.e.

$$ \mu^\V = \sum_{\alpha =1}^d x_{1 \alpha}^\alpha \hat X_1. $$

Now we recall the Levi decomposition 
$ \g = \mathfrak{s} \oplus \mathfrak{r} $,
where $\mathfrak{s}$ is semisimple and $\mathfrak{r}$ is the radical of $\g$. Then $\k \subset \mathfrak{s}$ because $\k \cap \mathfrak{r}$ is a solvable ideal of $\k$ (since $\mathfrak{r}$ is a radical), but since $\k$ is semisimple, the only such ideal is the trivial one $\{0\}$.

Furthermore $\mu^\V \in \mathfrak{r}$ since $$ \mu^\V\perp [\g, \g] \supset [\mathfrak{s}, \mathfrak{s}] = \mathfrak{s}.$$
So $[\mu^\V, \V] \subset \mathfrak{r}$, in particular 
$$ \V[ \mu^\V, \V ] = 0$$
and thus 
$$\mu^\V = \sum_{\alpha =1}^d  \big( g([\hat X_1, V_\alpha], V_\alpha) \big) \cdot \hat X_1 =0. $$

\end{proof}

\section{The Proofs of our Main Results}
This section begins by describing standard left-invariant Riemannian metrics on Lie groups.
For the details from Lie theory, we refer to the standard work \cite{Hel} of S. Helgason.
\smallskip

Let $K$ be a real semisimple Lie group with Lie algebra $\k$.  Let $B:\k\times\k\to\rn$ be its symmetric, bilinear and non-degenerate Killing form with  $$B(V,W)=\trace(\ad_V \circ \ad_W).$$
Let $\tilde K$ be the maximal compact subgroup of $K$ with Lie algebra $\tilde\k$.  Then we have an orthogonal decomposition $\k=\tilde\k\oplus\p$ of $\k$ with respect to the Killing form $B$, which is negative definite on $\tilde\k$ and positive definite  on $\p$.
\smallskip

In the following proofs we assume that $K$ is a \emph{normal} subgroup of $G$, ensuring that $\k$ is an ideal of $\g$ i.e. the structure constants $y_{i\alpha}^k = 0$ for all $i,k$ and  $\alpha$. So we may write

$$
\ad_{X_i}(V_\alpha)
=[X_i,V_\alpha]
=\sum_{\gamma=1}^d x^\gamma_{i\alpha} V_\gamma.
$$ 
We impose this condition because in general $\V$ and $\H$ may not be orthogonal with respect to the Killing form, so the arguments in the proofs of Theorem \ref{theorem-totally-geodesic} and Proposition \label{Proposition-CK-minimal} would fail without it.
\smallskip

\begin{proof}[Proof of Theorem \ref{theorem-totally-geodesic}]
We write the decomposition of the semisimple subgroup $K$ into simple factors
$$ K = K_1 \times \cdots \times K_r$$
in terms of Lie algebras we have that $\mathfrak{k} = \mathfrak{k}_1 \oplus \dots \oplus \mathfrak{k}_r$, where each $\mathfrak{k_t}$ is a \emph{simple} Lie ideal of $\k$. Since $\k$ is a semisimple, every derivation of $\k$ will be an inner derivation, in particular since $\k$ is an ideal of $\g$, we have that $ad_g : \k \to \k$ must be an inner derivation, i.e. for $g \in \g$ there exists $k_0 \in \k$ such that
$$ [g,\k] = [k_0,\k].$$
Thus it follows that each $\k_t$ will also be an ideal of $\g$ since 
$$ [g,\k_t] = [k_0,\k_t] \subset \k_t$$ 
so when using an orthonormal basis that is compatible with the decomposition of $\k$ we have that $x_{i \beta}^\alpha = 0 $ for any $V_\alpha, V_\beta$ in different simple components $\k_t,\k_s$ respectively. It remains to consider the case when $V_\alpha, V_\beta$ are contained in the same $\k_t$.

Since the restriction of the left-invariant Riemannian metric $g$ on $G$ to $K$ is biinvariant it is a negative multiple of the Killing form $B:\g\times\g\to\rn$ on each simple component $K_t$ of $K$ i.e. for some $\Lambda_1,\dots \Lambda_r \in\rn^+$ we have 
$$g|_{\k_t \times \k_t} = -\Lambda_t\cdot B|_{\k_t \times \k_t}.$$
Further, let $\{V_1,\dots V_d\}$ be an orthonormal basis for the semisimple subalgebra $\k$ of $\g$ with respect to $g$ i.e.  $$
g(V_\alpha,V_\beta)=- \Lambda_t \cdot B(V_\alpha,V_\beta)=\delta_{\alpha\beta},
$$
for all $1\le \alpha ,\beta\le n$.
 Then
\begin{eqnarray*}
B([V_\alpha,V_\beta],X_i) 
&=& B(V_\alpha,[V_\beta,X_i])\\	
&=&-B(V_\alpha,\sum_{k=1}^n x_{i\beta}^\gamma V_\gamma)\\	
&=& \Lambda_t \cdot x_{i\beta}^\alpha.
\end{eqnarray*}
Since $B([V_\alpha,V_\beta],X_i) +B([V_\beta,V_\alpha],X_i)=0$, the above steps show that for all $1\le i\le n$ we obtain 
$$\Lambda_t \cdot (x_{i\alpha}^\beta + x_{i\beta}^\alpha)=0.$$ 
This implies that $\mathcal{F}$ is totally geodesic. 	
\end{proof}

\begin{example}
Let $(G,g)$ be a six-dimensional Riemannian Lie group such that the orthonormal basis $\{A,B,C,X,Y,Z\}$ for its Lie algebra $\g$ satisfies the following Lie bracket relations
$$\lb AB = 2C,\quad \lb CA = 2B,\quad \lb BC = 2A,$$
\begin{eqnarray*}
&&\lb AX = -B,\quad \lb AY= -B,\quad\lb AZ = -B,\\
&&\lb BX = A,\quad \lb BY = A - 2C,\quad\lb BZ = A - 2C,\\
&&\lb CX = 0,\quad\lb CY = 2B,\quad \lb CZ = 2B,\\
&&\lb XY = B,\quad \lb XZ = B,\quad \lb YZ = 0.
\end{eqnarray*}
Here the compact semisimple $K=\SU 2$ is equipped with a biinvariant Riemannian metric (Example \ref{example-9}).  It is normal in $G$ since its Lie algebra $\k$, generated by $A,B,C$ is an ideal in $\g$.  The radical $\text{rad}(\g)$ is three dimensional so $G$ is not semisimple.  The left-invariant foliation $\F$ generated by $K$ is Riemannian and totally geodesic.  The horizontal distribution $\H$ is not integrable.   Employing the ideas of Remark \ref{remark-O'Neill} an easy calculations shows that the quotient group $(G/K,h)$ is the flat abelian $\rn^3$ (Example \ref{example-1}).
\end{example}

The proof of Theorem \ref{theorem-minimal} is rather straightforward, the first step is to show,  in Proposition \ref{Proposition-CK-minimal}, that there exists a Riemannian metric on any semisimple $K$ such that $\mathcal{F}$ is minimal.  
\smallskip

Let $\theta:\k\to\k$ be a {\it Cartan involution} on the semisimple subalgebra $\k$ of $\g$ i.e. an involutive automorphism, such that the bilinear form $g_K:\k\times\k\to\rn$ with 
$$g_K:(V,W)\mapsto -B(V,\theta(W))$$ 
is positive definite on $\k$.  Such an involution is unique up to inner automorphisms, see p.185 of \cite{Hel} for details.  This induces the {\it Cartan-Killing metric} on $K$ which we also denote by $g_K$.
\smallskip

\begin{proposition} 
\label{Proposition-CK-minimal}
Let $(G,g)$ be a Riemannian Lie group and $K$ be a semisimple normal subgroup of $G$ generating a left-invariant conformal foliation $\F$ on $G$.  If the metric $g$ restricted to $K$ is the Cartan-Killing metric $g_K$ then the foliation $\mathcal{F}$ is minimal.
\end{proposition}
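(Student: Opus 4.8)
The goal is to establish the minimality criterion obtained above, that is, to show $\sum_{\alpha=1}^d x_{i\alpha}^\alpha = 0$ for every $1\le i\le n$. Because $K$ is normal, $\k$ is an ideal of $\g$; hence $[X_i,V_\alpha]\in\k$, the coefficients $y_{i\alpha}^k$ all vanish, and $\ad_{X_i}$ restricts to an endomorphism of $\k$ with $x_{i\alpha}^\gamma = g_K([X_i,V_\alpha],V_\gamma)$. Since $\{V_1,\dots,V_d\}$ is $g_K$-orthonormal, the sum $\sum_\alpha x_{i\alpha}^\alpha = \sum_\alpha g_K(\ad_{X_i}(V_\alpha),V_\alpha)$ is exactly the trace of $\ad_{X_i}|_\k$. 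The plan is to compute this as a sum of Killing-form pairings that are manifestly antisymmetric, so that the trace vanishes; conceptually this records that $\ad_{X_i}|_\k$ is skew-adjoint for the Killing form and hence trace-free.

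First I would pick the orthonormal basis $\{V_\alpha\}$ adapted to the Cartan decomposition $\k=\tilde\k\oplus\p$, which is legitimate because this splitting is $g_K$-orthogonal. Then each $V_\alpha$ is a $\theta$-eigenvector, $\theta(V_\alpha)=\varepsilon_\alpha V_\alpha$ with $\varepsilon_\alpha\in\{+1,-1\}$. Using the defining identity $g_K(V,W)=-B(V,\theta(W))$ of the Cartan-Killing metric, each diagonal coefficient becomes
$$x_{i\alpha}^\alpha = g_K([X_i,V_\alpha],V_\alpha) = -\varepsilon_\alpha\,B([X_i,V_\alpha],V_\alpha).$$
I then invoke the $\ad$-invariance of the Killing form: for any $Z\in\g$ and $U,W\in\k$ one has $B([Z,U],W)=-B(U,[Z,W])$. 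Taking $Z=X_i$ and $U=W=V_\alpha$, together with the symmetry of $B$, forces $B([X_i,V_\alpha],V_\alpha)=0$, whence $x_{i\alpha}^\alpha=0$ for every $\alpha$. In particular $\sum_\alpha x_{i\alpha}^\alpha=0$ and $\F$ is minimal.

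The one step that genuinely needs care, and the precise place where normality is used, is the bookkeeping of the two Killing forms. The metric $g_K$ is manufactured from the Killing form of $\k$, while the invariance identity I apply lives on the ambient algebra $\g$ and involves the bracket with $X_i\notin\k$. These are reconciled by the standard fact that for an ideal $\k$ of $\g$ the Killing form of $\g$ restricts on $\k\times\k$ to the Killing form of $\k$; this is exactly what allows me to read the invariance of the $\g$-Killing form inside $\k$ and to denote both by $B$. Thus normality enters twice: to ensure $\ad_{X_i}$ preserves $\k$ (so the relevant trace is defined and the $y_{i\alpha}^k$ vanish), and to identify the restricted Killing form with $B$. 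I expect no further obstacle, since once the adapted basis and this identification are in place the remaining computation is the one-line antisymmetry argument above. Note that it in fact yields the stronger pointwise vanishing $x_{i\alpha}^\alpha=0$, which is consistent with $\F$ failing to be totally geodesic only through the off-diagonal terms $x_{i\alpha}^\beta+x_{i\beta}^\alpha$ with $\varepsilon_\alpha\neq\varepsilon_\beta$.
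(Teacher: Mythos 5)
Your proof is correct and follows essentially the same route as the paper: both reduce minimality to the vanishing of the diagonal coefficients $x_{i\alpha}^\alpha$ and obtain it from the $\ad$-invariance and symmetry of the Killing form via $B([X_i,V_\alpha],V_\alpha)=-B(V_\alpha,[X_i,V_\alpha])=0$, using $\theta(V_\alpha)=\pm V_\alpha$ to pass between $g_K$ and $B$. Your explicit remarks that the basis must be adapted to the Cartan decomposition $\k=\tilde\k\oplus\p$ and that normality is what identifies $B_\g|_{\k\times\k}$ with $B_\k$ make precise two points the paper leaves implicit, but the argument is the same.
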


\begin{proof}
Since $g|_K$ is the Cartan-Killing metric it follows that  $B(V_\alpha,V_\beta) = \delta_{\alpha \beta}\theta_\alpha$ for an orthonormal basis $\{V_1,\dots V_d\}$ of $\k$.  Here we denote $\theta(V_\alpha)$ by  $\theta_\alpha$.  Then for any horizontal basis vector $X_i$ we have
\begin{eqnarray*}
B([V_\alpha,V_\beta],X_i) 
&=& B(V_\alpha,[V_\beta,X_i])\\	
&=&-B(V_\alpha,\sum_{k=1}^n x_{i\beta}^\gamma V_\gamma)\\	
&=& \theta_{\alpha}\, x_{i\beta}^\alpha.
\end{eqnarray*}
Since $B([V_\alpha,V_\beta],X_i) = -B([V_\beta,V_\alpha],X_i)$, the above steps show that 
$$\theta_{\alpha}\, x_{i\beta}^\alpha = - \theta_{\beta}\, x_{i\alpha}^\beta.$$ 
Then for $\alpha=\beta$ we get that $x_{i\alpha}^\alpha=-x_{i\alpha}^\alpha = 0$. Thus the foliation $\F$ is minimal. 	
\end{proof}

\begin{example}
Let $(G,g)$ be a six-dimensional Riemannian Lie group such that the orthonormal basis $\{A,B,C,X,Y,Z\}$ for its Lie algebra $\g$ satisfies the following Lie bracket relations
$$\lb AB = 2C,\quad \lb CA = 2B,\quad \lb BC = -2A,$$
\begin{eqnarray*}
&&\lb AX=B + C,\quad \lb AY = B + C,\quad\lb AZ = B + C,\\ 
&&\lb BX=A - 2C,\quad\lb BY = A-4C,\quad\lb BZ = A-2C,\\
&&\lb CX=A + 2B,\quad \lb CY=A + 4B,\quad\lb CX=A + 2B,\\
&&\lb XY=-2A - 2B + 2Z,\quad \lb XZ = 4A + B - C - 2Y,\\
&&\lb YZ= -2A + 2C + 2X.
\end{eqnarray*}
Here the non-compact semisimple $K=\SLR 2$ (Example \ref{example-8}) is equipped with the Cartan-Killing metric.  It is normal in $G$ since its Lie algebra $\k$, generated by $A,B,C$ is an ideal in $\g$.
The radical $\text{rad}(\g)$ vanishes so $G$ is semisimple.  The left-invariant foliation $\F$ generated by $K$ is Riemannian and minimal but not totally geodesic.  The horizontal distribution $\H$ is not integrable.  The quotient group $G/K$ is the compact semisimple $\SU 2$. Using Remark \ref{remark-O'Neill} it is easy to show that the Riemannian Lie group $(G/K,h)$ has constant sectional curvature $+1$. 
\end{example}

We are now ready to prove our main result stated in Theorem \ref{theorem-minimal}.

\begin{proof}[Proof of Theorem \ref{theorem-minimal}]
Let $\{V_1,...,V_d,X_1,...,X_n\}$ be an orthonormal basis for $\mathfrak{g}$ with respect to the metric $g$ such that $V_1,...,V_n$ generate the subalgebra $\k$. 
Then notice that
$$\trace (\ad_{X_i}) = \sum_{\gamma=1}^d x_{i\gamma}^\gamma + \sum_{k=1}^n g(\ad_{X_i} (X_k),X_k).$$
Now equip $G$ with an additional left-invariant metric $\hat g$, which we can fully describe by its action on $\mathfrak{g}$.
First define $\hat g|_{\V \times\V}$ to be the Cartan-Killing metric on $K$ and then let
$$
\hat g|_{\H \times \H} = g|_{\H \times \H}, \ \ \hat g|_{\V \times \H} = g|_{\V \times \H}.
$$
Then we can use the Gram-Schmidt process to obtain an orthonormal basis $\{W_1,...,W_d,X_1,...X_n\}$ with respect to the metric $\hat g$. Since both metrics are left-invariant changing the metric simply amounts to a change of basis. Then it follows from Proposition \ref{Proposition-CK-minimal} that the structure constants with respect to the new metric $\hat g$,  
$$
\hat x_{i\alpha}^\alpha=\hat g(W_\alpha,[X_i,W_\alpha]),
$$ 
are all equal to zero. Then since the trace of an operator is invariant under the change of basis, and the fact that each $X_i$ is unchanged by the above Gram-Schmidt process, we get that
\begin{eqnarray*}
\sum_{\alpha=1}^d  x_{i\alpha}^\alpha  
&=& \trace (\ad_{X_i}) - \sum_{j=1}^n g(\ad_{X_i}(X_j),X_j)\\
&=& \trace (\ad_{X_i}) - \sum_{j=1}^n \hat g(\ad_{X_i}(X_j),X_j)\\
&=& \sum_{\alpha=1}^d \hat x_{i\alpha}^\alpha \\
&=& 0.
\end{eqnarray*}
Since this is true for all $1\le i\le n$ it follows that $\F$ is minimal.
\end{proof}

The following example is interesting in connection with Theorem \ref{theorem-minimal}.  The group $K$ is normal but its ambient group $G_\alpha$ is not semisimple.

\begin{example}
For $\alpha\in\rn^+$, let $(G_\alpha,g_\alpha)$ be a six-dimensional Riemannian Lie group such that the orthonormal basis $\{A,B,C,X,Y,Z\}$ for its Lie algebra $\g$ satisfies the following Lie bracket relations
$$[A, B] = C,\quad [C, A] = B,\quad [B, C] = 4A,$$
\begin{eqnarray*}
&&[A, X] = B + C,\quad [A, Y] = 0,\quad [A, Z] = B + C,\\
&&[B, X] = -4A - C,\quad [B, Y] = -C,\quad [B, Z] = -4A - C,\\
&&[C, X] = -4A + B,\quad [C, Y] = B,\quad [C, Z] = -4A + B,\\
&&[X, Y] = -B - C,\quad  [Y, Z] = -A + B + C + Y,\\
&&[X, Z] = \alpha A + \alpha B - \alpha\, C - \alpha X.
\end{eqnarray*}
The semisimple subgroup $K$ is normal in $G_\alpha $ since its Lie algebra $\k$, generated by $A,B,C$ is an ideal in $\g_\alpha$. The quotient $G_\alpha/K$ is the solvable Lie group $\text{Sol}_\alpha^3$ (Example \ref{example-6}). The radical $\text{rad}(\g_\alpha)$ is three-dimensional so $G_\alpha$ is not semisimple.  The left-invariant foliation $\F_\alpha$ generated by $K$ is Riemannian and minimal but not totally geodesic.  The horizontal distribution $\H_\alpha$ is not integrable.
\end{example}

We remind the reader of the fact the three-dimensional Lie groups $\text{Sol}_\alpha^3$ do not carry conformal foliations with minimal leaves, independent of which left-invariant metrics they are given.  The same applies to the Lie group $G_4$ in Example \ref{example-4}.  For this see Theorem 4.6 of \cite{Gud-Sve-5}.

\section{Normality for Codimension Three}
\label{section-normality-dimension-3}

The main aim of this section is to prove Proposition \ref{proposition-not-perfect}.  

\begin{definition}
A Lie group $G$ is said to be {\it perfect} if its Lie algebra $\g$ satisfies $[\g,\g]=\g$.
\end{definition}

It is well-known that every semisimple Lie group is perfect. The following example shows that the converse is not true.
\smallskip

\begin{example}
Let $(G,g)$ be a six-dimensional Riemannian Lie group such that the orthonormal basis $\{A,B,C,X,Y,Z\}$ for its Lie algebra $\g$ satisfies the following Lie bracket relations
$$[A, B] = C,\quad [C, A] = B,\quad [B, C] = A,$$
\begin{eqnarray*}
\lb AX&=& -A-C-Z,\quad \lb AY= -B-C,\\
\lb AZ&=& A + B + X,\quad \lb BX= C,\\
\lb BY&=&2A + C + Z,\quad \lb BZ= -A - B + 2C - Y,\\
\lb CX&=& - 2B + C - Y,\quad  \lb CY= 2A + X,\\ 
\lb CZ&=& - B,\quad \lb XY= Y - 2Z - 2A + 2B - C,\\
\lb XZ&=& -X + Y - Z - A - 2C,\quad \lb YZ= -X + Y - B - 2C.
\end{eqnarray*}
\end{example}
The Lie algebra $\g$ is perfect and its radical $\text{rad}(\g)$ is generated by the three elements
$A+Z$, $B+X-Z$ and $C+X-Y$.  Hence $G$ is not semisimple. 
The compact semisimple subgroup $K=\SU 2$ is not normal since its Lie algebra $\k$ generated by $A,B,C$ is not an ideal in $\g$. The left-invariant foliation $\F$ generated by $K$ is Riemannian and minimal but not totally geodesic.  The horizontal distribution $\H$ is not integrable.

\begin{proposition}
\label{proposition-not-perfect}
Let $(G,g)$ be a Riemannian Lie group with semisimple subgroup $K$ generating a left-invariant conformal foliation $\F$ on $G$ of codimension three. If $G$ is not perfect then $K$ is normal in $G$.
\end{proposition}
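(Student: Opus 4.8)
The plan is to translate normality into a statement about the adjoint action and then exploit the low codimension together with the non-perfect hypothesis. Writing $\m=\H$ for the horizontal complement, normality of $K$ is equivalent to $\k$ being an ideal, that is, to $[\k,\m]\subseteq\k$, which in the chosen basis means exactly $y^k_{i\alpha}=0$ for all indices. So the whole task reduces to showing that the horizontal component $R(V):=\H\circ\ad_V|_{\m}$ of the adjoint action vanishes for every $V\in\k$. First I would invoke Proposition \ref{proposition-conformal-foliation-is-Riemannian}: since $K$ is semisimple and $\F$ is conformal, $\F$ is in fact Riemannian, so $B^{\H}=0$. By Proposition \ref{prop-general-conformality} this forces $g(R(V)X,Y)+g(R(V)Y,X)=0$, i.e.\ each $R(V)$ is a skew-symmetric endomorphism of $\m$. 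As the codimension is $\dim\m=3$, this says that $R$ takes values in $\so{\m}\cong\so 3$.

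The key structural point is that $R$ is a homomorphism of Lie algebras. Because $\k$ is a subalgebra, $\ad_V$ maps $\k$ into $\k$ for $V\in\k$, so with respect to $\g=\k\oplus\m$ the operator $\ad_V$ is block upper triangular with lower-right block $R(V)$; since the lower-right block of a product of such matrices is the product of the lower-right blocks, $V\mapsto R(V)$ respects brackets. Thus $R\colon\k\to\so 3$ is a Lie algebra homomorphism, and its image $R(\k)\cong\k/\ker R$ is a semisimple subalgebra of $\so 3$. Since $\so 3$ is three-dimensional and simple and every semisimple Lie algebra has dimension at least three, the only semisimple subalgebras of $\so 3$ are $\{0\}$ and $\so 3$ itself; hence either $R\equiv 0$ or $R$ is surjective.

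It remains to rule out surjectivity using that $G$ is not perfect, and this is the step I expect to carry the real content. Suppose $R(\k)=\so 3$. The standard action of $\so 3$ on $\m\cong\rn^3$ is irreducible, so the span of $\{R(V)X:V\in\k,\,X\in\m\}=\H[\k,\m]$ is all of $\m$. On the other hand $\k=[\k,\k]\subseteq[\g,\g]$, since $\k$ is semisimple and hence perfect. Combining these: every $m\in\m$ is the horizontal part of some element $w\in[\k,\m]\subseteq[\g,\g]$ whose vertical part already lies in $\k\subseteq[\g,\g]$, so $m=w-\V w\in[\g,\g]$; therefore $[\g,\g]=\k+\m=\g$, contradicting that $G$ is not perfect. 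Consequently $R\equiv 0$, that is $[\k,\m]\subseteq\k$, so $\k$ is an ideal and $K$ is normal. The subtle points to watch are that the Riemannian reduction — and hence the skew-symmetry that lands $R$ in $\so 3$ rather than merely in $\glr 3$ — is precisely what produces the dichotomy ``$0$ or everything'', and that the block-triangular argument genuinely uses only that $\k$ is a subalgebra, not an ideal.
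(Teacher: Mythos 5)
Your proposal is correct, and it reaches the conclusion by a genuinely different route from the paper. Both arguments start from the same reduction: conformality plus semisimplicity makes $\F$ Riemannian, which says exactly that each $R(V)=\H\circ\ad_V|_{\H}$ is skew-symmetric, and normality amounts to $R\equiv 0$. From there the paper fixes an orthonormal horizontal basis, writes the entries of $R(V_\alpha)$ as $r_\alpha,s_\alpha,t_\alpha$, and runs an explicit case analysis on which entries vanish, finishing with a Jacobi-identity computation combined with $[\k,\k]=\k$. You instead observe that $R\colon\k\to\so 3$ is a Lie algebra homomorphism (the block-triangularity argument is sound and uses only that $\k$ is a subalgebra, which hides the paper's Jacobi-identity step inside the homomorphism property of $\ad$), so its image is a semisimple subalgebra of $\so 3$ and hence $0$ or everything; surjectivity is then excluded by irreducibility of the standard $\so 3$-action together with $\k=[\k,\k]\subseteq[\g,\g]$ and the non-perfectness of $\g$. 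Your version is coordinate-free, avoids the case analysis, and isolates exactly where codimension three is used — $\so 3$ has no proper nonzero semisimple subalgebras — which also explains the paper's codimension-four counterexample, where $R(\k)$ is a proper copy of $\su 2$ inside $\so 4$ acting reducibly on $\H$ and fixing the direction that $[\g,\g]$ misses. The only point worth stating explicitly in a final write-up is that the passage from ``$\k$ is an ideal'' to ``$K$ is normal'' is being used at the same level of rigor as elsewhere in the paper.
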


\begin{proof}
Let $\{V_1, \dots V_d,X,Y,Z\}$ be an orthonormal basis for the Lie algebra $\mathfrak{g}$ of $G$ such that $X,Y,Z$ generate the three-dimensional horizontal distribution $\H$.  According to Proposition \ref{proposition-conformal-foliation-is-Riemannian} the foliation $\F$ is Riemannian. In particular this means that
\[
  \begin{aligned}[t]
 \H [X,V_\alpha] &=&  & & &r_\alpha \cdot Y& + s_\alpha \cdot Z, \\
 \H [Y,V_\alpha] &=&  &- r_\alpha \cdot X& & &  +  t_\alpha \cdot Z, \\
 \H [Z,V_\alpha] &=& &- s_\alpha \cdot X&  &-  t_\alpha \cdot Y&   .
  \end{aligned}
\]
It is clear that $K$ is normal in $G$ if and only if $ r_\alpha, s_\alpha$, $t_\alpha$,  all vanish for each $\alpha$. Suppose this is not the case, then by reordering the $V_\alpha$, we can suppose that at least one of $r_1,s_1,t_1$ is non-zero.

Since $\g$ is not perfect we have that
 $$[\mathfrak{g},\mathfrak{g}] \neq \mathfrak{g}.$$
In particular $[\mathfrak{g},\mathfrak{g}]^\perp$ is a non-trivial subspace. Since $[\V,\V] = \V \subset [\g,\g]$ it follows that $[\mathfrak{g},\mathfrak{g}]^\perp$ is contained in the horizontal subspace $\H$.

By the definition of the derived subalgebra we have that $$\V+\H [V_i,X]+\H [V_i,Y]+\H [V_i,Z] \subset [\g,\g]$$  
and so $\H[\g,\g]$ contains the span of $\{\H [V_i,X],\H [V_i,Y],\H [V_i,Z]\}$. An arbitrary linear combination of these vectors is given by
\begin{eqnarray*}
 \lambda( r_\alpha \cdot Y + s_\alpha \cdot Z)+ \mu(- r_\alpha \cdot X+ t_\alpha \cdot Z)+ \eta (- s_\alpha \cdot X -  t_\alpha \cdot Y) \\
 = -(\mu  r_\alpha+\eta s_\alpha) \cdot X + (\lambda  r_\alpha-\eta t_\alpha) \cdot Y+(\lambda s_\alpha +\mu  t_\alpha) \cdot Z,
\end{eqnarray*}
which can be written as
$$
\begin{bmatrix}
- r_\alpha &  s_\alpha & 0 \\
0 &   t_\alpha &  r_\alpha \\
 t_\alpha & 0 &  s_\alpha
\end{bmatrix}\cdot 
\begin{pmatrix}
\mu\\ -\eta\\ \lambda
\end{pmatrix}.
$$
If $ r_\alpha, s_\alpha, t_\alpha$ are all non-zero for some $\alpha$, then this matrix has maximum rank and $\mathfrak{g}$ would be perfect, so at least one must be zero. 
Without loss of generality we assume that $r_1=0$. Now we show that with this assumption $ r_\alpha=0$ for all $\alpha = 1,\dots d$. Suppose that  $ r_\beta \neq 0$ and, without loss of generality, that $ s_\beta= 0$. Then we have the following structure equations
\begin{eqnarray*}
\H [X,V_1] &=& s_1 \cdot Z, \\
\H [Y,V_1]&=& t_1 \cdot Z, \\
\H [Z,V_1] &=& -s_1 \cdot X - t_1 \cdot Y.
\end{eqnarray*}
So, since we are assuming that at least one of $s_1$ and $t_1$ is non-zero it follows that $Z \in [\mathfrak{g},\mathfrak{g}]$. Similarly we compute for $\beta$:
\begin{eqnarray*}
\H [X,V_\beta] &=&r_\beta \cdot Y, \\
\H [Y,V_\beta]&=& -r_\beta \cdot X  + t_\beta \cdot Z, \\
\H [Z,V_\beta] &=&  - t_\beta \cdot Y.
\end{eqnarray*}
Similarly we see that $Y \in [\mathfrak{g},\mathfrak{g}]$, and finally since $r_\beta \cdot X  +t_\beta \cdot Z \in  [\mathfrak{g},\mathfrak{g}]$ we can conclude that $X$ must be also, so $G$ would be perfect. Thus, if $r_1=0$ we have that  $r_\alpha=0$ for all $\alpha$. \newline
Now we will show that $s_\alpha$ and $t_\alpha$ must also vanish. We recall the Jacobi identity
$$ 0= [ V_\alpha, [V_\beta,X]] + [V_\beta, [X,V_\alpha]]+[X,[V_\alpha,V_\beta]].$$
Furthermore, since $K$ is semisimple, there exists a linear combination $$ \sum_{\alpha<\beta} a_{\alpha \beta}^\gamma[V_\alpha,V_\beta]= V_\gamma$$ for any $\gamma =1,\dots d$. 
Since $ r_\alpha=0$ for all $\alpha$ we have, for any $\alpha,\beta$
\begin{eqnarray*}
 g([ V_\alpha, [V_\beta,X]], Z ) &=& g( [ V_\alpha,  - s_\beta \cdot Z ], Z ) \\
 &=& s_\beta \cdot g( -s_\alpha \cdot X - t_\alpha \cdot Y, Z ) \\
 &=& 0.
\end{eqnarray*}

Finally we compute

\begin{eqnarray*}
0&=& g(\sum_{\alpha<\beta} a_{\alpha \beta}^\gamma \left( [ V_\alpha, [V_\beta,X]] + [V_\beta, [X,V_\alpha]]+[X,[V_\alpha,V_\beta]] \right), Z ) \\
&=& \sum_{\alpha<\beta} \left( a_{\alpha \beta}^\gamma \cdot g(  \left( [ V_\alpha, [V_\beta,X]] + [V_\beta, [X,V_\alpha]] \right), Z ) \right)\\
&&\qquad\qquad \qquad\qquad \qquad\qquad 
+ g(  \sum_{\alpha<\beta} a_{\alpha \beta}^l[X,[V_\alpha,V_\beta]] , Z ) \\
&=& 0+g( [X, V_\gamma], Z ) \\
&=& s_\gamma
\end{eqnarray*}
and so $s_\gamma=0$ for all $\gamma$. Replacing $X$ with $Y$ in the above computation shows that $t_\gamma = 0$, and hence $K$ is normal in $G$.
\end{proof}

Next we provide an example that shows that the result of Proposition \ref{proposition-not-perfect} does not apply in the case when the codimension of $K$ in $G$ is four.

 \begin{example}
Let $(G,g)$ be a seven-dimensional Riemannian Lie group such that the orthonormal basis $\{A,B,C,X,Y,Z,W\}$ for its Lie algebra $\g$ satisfies the following bracket relations
$$[A, B] = 2C,\quad [C, A] = 2B,\quad [B, C] = 2A,$$
\begin{eqnarray*}
&&[A, X] = 2Z,\quad [A, Y] = 0,\quad [A, Z] = -2X,\quad [A, W] = 0,\\ 
&&[B, X] = 0,\quad [B, Y] = 2Z,\quad [B, Z] = -2Y,\quad [B, W] = 0,\\
&&[C, X] = 2Y,\quad [C, Y] = -2X,\quad [C, Z] = 0,\quad [C, W] = 0,\\ 
&&[X, Y] = C,\quad [X, Z] = A,\quad [Y, Z] = B.
\end{eqnarray*}
The Lie algebra $\g$ is not perfect so it is not semisimple. The compact subgroup $K=\SU 2$ is not normal since its Lie algebra $\k$ generated by $X,Y,Z$ is not an ideal in $\g$. The left-invariant foliation $\F$ generated by $K$ is Riemannian and totally geodesic.  The horizontal distribution $\H$ is not integrable.  The quotient space $(G/K,h)$ is the flat $\rn^4$.
\end{example}

\section{Acknowledgments}
\label{section-acknowledgments}

The authors would like to thank the anonymous referees for their useful comments leading to an improved presentation of this work.

\appendix
\section{The Lie Algebras of Dimension Two and Three}
\label{section-Bianchi}

The two-dimensional Lie algebras fall into two isomorphy classes, the abelian and the non-abelian types.

\begin{example}[Type A]
\label{example-01}
The two-dimensional abelian Lie algebra we denote by $\r_2$.  The corresponding simply connected Lie group is the abelian  $\rn^2$ which we equip with its standard flat Euclidean metric.
\end{example}

\begin{example}[Type B] 
\label{example-02}
Let $\h_2$ denote the Lie algebra with an orthonormal basis $\{X,Y\}$ satisfying
$$[X,Y]=\rho\cdot Y.$$
The corresponding simply connected Lie group is the hyperbolic plane $H^2$ of constant curvature $-\rho^2$.
\end{example}

At the end of the 19th century Luigi Bianchi classified the $3$-dimensional real Lie algebras.  They fall into nine disjoint types I-IX. Each contains a single isomorphy class except types VI and VII which are continuous families of different classes.  For reference we list below Bianchi's classification and notation for the corresponding simply connected Lie groups. We also equip these Lie groups with the left-invariant Riemannian metric for which the given basis $\{X,Y,Z\}$ of each Lie
algebra is orthonormal.

\begin{example}[Type I]
\label{example-1}
The three-dimensional Lie algebra $\r_3$ is abelian and the corresponding simply connected Lie group is the abelian $\rn^3$ which we equip with its standard flat Euclidean metric.
\end{example}

\begin{example}[Type II] 
\label{example-2}
The nilpotent Lie algebra $\mathfrak{nil}_3$ has basis $\{X,Y,Z\}$ satisfying
$$[X,Y]=Z.$$
The corresponding simply connected Lie group is the nilpotent Heisenberg group $\text{Nil}^3$.
\end{example}

\begin{example}[Type III]
\label{example-3}
The Lie algebra $\h^2\oplus\rn$ is the direct sum of $\h_2$ and the one-dimensional $\r_1$ given by 
$$[Y,X]=X.$$
The corresponding simply connected Lie group is denoted by $H^2\times\rn$. Here $H^2$ is the standard hyperbolic plane.
\end{example}

\begin{example}[Type IV]
\label{example-4}
The Lie algebra $\g_4$ with basis $\{X,Y,Z\}$ satisfying
$$[Z,X]=X,\quad [Z,Y]=X+Y.$$
The corresponding simply connected Lie group is denoted by $G_4$.
\end{example}

\begin{example}[Type V]
\label{example-5}
The Lie algebra $\h^3$ with a basis $\{X,Y,Z\}$ satisfying
$$[Z,X]=X,\quad [Z,Y]=Y.$$
The corresponding simply connected Lie group $H^3$ is the standard three dimensional hyperbolic space of constant sectional curvature $-1$
\end{example}

\begin{example}[Type VI] 
\label{example-6}
For $\alpha\in\rn^+$, the Lie algebra $\sol_\alpha^3$ has basis $\{X,Y,Z\}$ satisfying
$$[Z,X]=\alpha X,\quad [Z,Y]=-Y.$$
The corresponding simply connected Lie group is denoted by
$\text{Sol}_\alpha^3$.
\end{example}

\begin{example}[Type VII] 
\label{example-7}
The Lie algebra $\g_7(\alpha)$, where
$\alpha\in\rn$, is the Lie algebra with basis $\{X,Y,Z\}$ satisfying
$$[Z,X]=\alpha X-Y,\quad [Z,Y]=X+\alpha Y.$$
The corresponding simply connected Lie group
is denoted by $G_7(\alpha)$.
\end{example}

\begin{example}[Type VIII] 
\label{example-8}
The Lie algebra $\slr 2$ has basis $\{X,Y,Z\}$
satisfying
$$[X,Y]=-2Z,\quad [Z,X]=2Y,\quad [Y,Z]=2X.$$
The corresponding simply connected Lie group is denoted by
$\widetilde{\SLR 2}$ as it is the universal cover of the special
linear group $\SLR 2$.
\end{example}

\begin{example}[Type IX] 
\label{example-9}
The Lie algebra $\su 2$ has basis $\{X,Y,Z\}$ satisfying
$$[X,Y]=2Z,\quad [Z,X]=2Y,\quad [Y,Z]=2X.$$
The corresponding simply connected Lie group is  $\SU 2$. This is isometric to the standard three-dimensional sphere $S^3$ of constant curvature $+1$. 
\end{example}


                 


\begin{thebibliography}{99}
\bibitem{Bai-Eel}
P. Baird, J. Eells,
{\it A conservation law for harmonic maps},
Geometry Symposium Utrecht 1980, Lecture Notes in Mathematics {\bf 894}, 1-25, Springer (1981).	

	
\bibitem{Bai-Woo-book}
P.~Baird and J.~C. Wood,
{\it Harmonic Morphisms Between Riemannian Manifolds},
London Math. Soc. Monogr. No. {\bf 29},
Oxford Univ. Press (2003).	


\bibitem{Bry}
R. L. Bryant, 
{\it Harmonic morphisms with fibers of dimension one}, 
Comm. Anal. Geom. {\bf 8} (2000), 219–265.


\bibitem{Gud-bib}
S.~Gudmundsson,
{\it The Bibliography of Harmonic Morphisms},
{\tt www.matematik.lu.se/ matematiklu/personal/sigma/harmonic/bibliography.html}

\bibitem{Gud-12}
S. Gudmundsson,
{\it Harmonic morphisms from five-dimensional Lie groups},
Geom. Dedicata {\bf 184} (2016), 143-157.

\bibitem{Gud-Mun-2}
S. Gudmundsson, T. J. Munn,
{\it Harmonic morphisms on Lie groups and minimal conformal foliations of codimension two},
J. Geom. Phys. {\bf 198} (2024), 105130.

\bibitem{Gud-Sve-5}
S. Gudmundsson, M. Svensson,
{\it On the existence of harmonic morphisms from three-dimensional Lie groups},
Contemp. Math. {\bf 542} (2011), 279-284.

\bibitem{Gud-Sve-6}
S. Gudmundsson, M. Svensson,
{\it Harmonic morphisms from four-dimensional Lie groups},
J. Geom. Phys. {\bf 83} (2014), 1-11.


\bibitem{Hel}
S. Helgason, 
{\it Differential Geometry, Lie Groups and Symmetric Spaces}, 
Academic Press (1978).

\bibitem{ONe}
B. O’Neill,
{\it The fundamental equations of a submersion}, Mich. Math. J. {\bf 13} (1966), 459-469.

\bibitem{Wood1986}
J. C. Wood,
{\it Harmonic morphisms, foliations and {G}auss maps},
Contemp. Math. {\bf 49} (1986), 145-184.


\end{thebibliography}
\end{document}